\newtheorem{theorem}{Theorem}
\newtheorem{lemma}[theorem]{Lemma}
\newtheorem{proposition}[theorem]{Proposition}
\newtheorem{remark}[theorem]{Remark}
\newcommand\R{\mathbb{R}}
\newcommand\C{\mathbb{C}}
\title{Computation of the deformation of rhombi-slit kirigami}
\date{}
\author{\begin{minipage}{\textwidth}\centering
		Frédéric Marazzato \\
		\small{Department of Mathematical Sciences, University of Nevada Las Vegas, Las Vegas, NV 89154-4020, USA}\\
		\small{\texttt{frederic.marazzato@unlv.edu}}
   \end{minipage}
   }
\begin{document}
\hypersetup{urlcolor=blue,linkcolor=red,citecolor=blue}

\maketitle

\begin{abstract}
Kirigami are part of the larger class of mechanical metamaterials, which exhibit exotic properties.
This article focuses on rhombi-slits, which is a specific type of kirigami.
A nonlinear kinematic model was previously proposed as a second order divergence-form PDE with a possibly degenerate, and sign-changing coefficient matrix.
We first propose to study the existence of solutions to a regularization of this equation by using the limiting absorption principle.
Then, we propose a finite element method to approximate the solutions to the regularized equation.
Finally, simulations are compared with experimental results.
\end{abstract}

\textit{Keywords: Kirigami, Degenerate PDE, Sign-changing PDE, Limiting absorption principle.}

\textit{AMS Subject Classification: 35M12, 65N12, 65N30 and 35Q74.}
 
\section{Introduction}
Kirigami is a variation of origami, the Japanese art of paper folding.
In kirigami, the paper is cut as well as being folded.
Origami and kirigami have been studied as concrete examples of mechanical metamaterials \cite{schenk2013geometry,boatti2017origami,
zheng2023modelling,wickeler2020novel}.
Mechanical metamaterials are solids with unusual mechanical properties that are generated by inner mechanisms of the system.
These materials have found applications for generating soft robots \cite{rafsanjani2019programming} or in
aerospace engineering \cite{morgan2016approach,manan}.
The mechanisms giving metamaterials their unusual properties are discrete by nature, and have been modeled as such \cite{DANG2022111224,FENG2020104018}.
Recently, important efforts have been devoted to proposing a continuous description of origami and kirigami-based metamaterials
\cite{nassar2017curvature,lebee2018fitting,
nassar2022strain,zheng2022continuum}.
The modeling process results in homogenized PDEs where the characteristic size of the mechanism is considered negligible with respects to the size of the whole structure.
Some first results regarding the existence of solutions to these nonlinear PDEs and their numerical approximation have been achieved in \cite{marazzato,marazzato2022mixed,zheng2022continuum}.

This paper focuses on kirigami patterns with parallelogram panels and rhombi slits as presented in \cite{zheng2022continuum}.
Figure \ref{fig:notation} shows one such kirigami pattern.
In this entire article, deformations remain planar.
$\xi_\mathrm{in}$ is defined as the slit opening in the undeformed configuration.
Let $\Omega \subset \mathbb{R}^2$ be an open bounded set.
$\xi :\Omega \to \mathbb R$ is the opening of the slit, and $\gamma :\Omega \to \mathbb R$ is the local rotation of a panel, as represented in Figure \ref{fig:notation}.
\begin{figure} [htb]

\begin{minipage}{0.4\textwidth}
   \centering
   \begin{tikzpicture}
   \pgfmathsetmacro{\l}{1}
   \fill (\l,0) -- (0,\l) -- (\l,2*\l) -- (2*\l,\l) -- cycle; 
   \fill (3*\l,0) -- (2*\l,\l) -- (3*\l,2*\l) -- (4*\l,\l) -- cycle; 
   \fill (3*\l,0) -- (2*\l,-\l) -- (3*\l,-2*\l) -- (4*\l,-\l) -- cycle; 
   \fill (\l,0) -- (0,-\l) -- (\l,-2*\l) -- (2*\l,-\l) -- cycle; 
   \draw[->] (1.5*\l,-0.5*\l) arc (-45:45:0.7);
   \node[right] at (1.75*\l,0) {$2\xi_{\mathrm{in}}$};
   \end{tikzpicture}
   
\end{minipage}
   
\begin{minipage}[b]{0.4\textwidth}
  
   \tikzset{
    square/.pic={
   \pgfmathsetmacro{\l}{1}
   \fill (\l,0) -- (0,\l) -- (\l,2*\l) -- (2*\l,\l) -- cycle; 
   } }
   
   \tikzset{
    test/.pic={
   \pic[rotate=20] at (0,0) {square};
    	\pic[rotate=-20] at (1.17,0.67) {square};
    \pic[rotate=-20] at (-0.7,-1.2) {square};
    \pic[rotate=20] at (1.87,-1.887) {square};
    \draw[red] (-.35,2.22) -- (3.41,2.22) -- (3.41,-1.55) -- (-.35,-1.55) -- cycle;
    \draw[red] (3.41,-1.55) -- (3.67,0);
    \draw[->,red] (3.6,-.5) arc (90:100:1);
    \node[red,right,rotate=-10] at (3.6,-.5) {$\gamma$};
    \draw[->] (1.08,0) arc (-40:40:.5);
    \node[preaction={fill, white, opacity=.6}, rotate=-10] at (2.1,0) {$2(\xi+\xi_\mathrm{in})$};
   } }

   \begin{tikzpicture}
   \pic[transform canvas={rotate=10},shift={(10,-.5)}] {test};
   \end{tikzpicture}
   
\end{minipage}  
   
   \caption{Notations.}
   \label{fig:notation}
\end{figure} 
$y_\mathrm{eff}: \Omega \to \mathbb R^2$ is the effective deformation, which  tracks the cell-averaged panel motions.
Using a coarse-graining technique, \cite{zheng2022continuum} has proposed that
\begin{equation}
\label{eq:yeff}
\nabla y_\mathrm{eff} = R(\gamma) A_\mathrm{eff}(\xi),
\end{equation}
where $R(\gamma)$ is the canonical $2\times2$ rotation matrix parametrized by the angle $\gamma$, and $A_\mathrm{eff}(\xi) \in \mathbb R^{2 \times 2}$ is called the shape tensor.
Note that the full gradient of $y_\mathrm{eff}$ appears in \eqref{eq:yeff} instead of only the symmetrized gradient as in small strain elasticity.
This can be concisely explained by the fact that the framework at play, in this paper, is that of finite strains and not of small strain elasticity.
See \cite{zheng2022continuum} for more details.

As the left-hand side of \eqref{eq:yeff} is a gradient, a necessary condition to have solutions of \eqref{eq:yeff} is that the curl of the right-hand side be zero.
In \cite{zheng2022continuum} it is shown that this necessary condition can be written as
\begin{equation}
\label{eq:gamma}
\nabla \gamma = \Gamma(\xi) \nabla \xi,
\end{equation}
where $\Gamma(\xi)$ is a $2\times2$ matrix, which depends on $A_\mathrm{eff}(\xi)$.
Similarly, a necessary condition for \eqref{eq:gamma} to be verified is
\begin{equation}
\label{eq:main}
-\mathrm{div}\left(R\left(\frac\pi2\right) \Gamma(\xi) \nabla \xi\right) = 0.
\end{equation}
The main goal of this paper is to compute approximate solutions to \eqref{eq:main} supplemented with appropriate Dirichlet and Neumann boundary conditions.
In \cite{zheng2022continuum}, the authors propose methods to compute solutions of \eqref{eq:main} only for two specific patterns, see Section \ref{sec:elliptic} and Section \ref{sec:hyperbolic} of this paper.
In the follow-up work \cite{zheng2023modelling}, the authors give an energy and compute its minimizers but they do not concurrently solve the kinematic constraint.
The present work proposes a way to approximate the solutions of \eqref{eq:main} in a more general case.

Let $B(\xi) := R\left(\frac\pi2 \right)\Gamma(\xi)$.
Studying the existence of solutions to \eqref{eq:main} presents two main difficulties.
The first is that $B(\xi)$ can degenerate for certain values of $\xi$.
The second is that the sign of $\mathrm{det}(B(\xi))$ can change, thus locally changing the type of \eqref{eq:main} from an elliptic to a hyperbolic PDE.
A similar problem is encountered with Tricomi and Keldysh equations, see \cite{chen2012mixed} for instance.
Regarding the issue of degeneracy, Muckenhopt weights \cite{turesson2000nonlinear} have been used to study the existence of solutions to degenerate elliptic equations, see \cite{di2008harnack}, for instance.
Muckenhopt weights can be used, for instance, to handle degeneracies localized at a point.
However, as in \cite{nicolopoulos2020}, the degeneracy we encounter is localized on a curve, and thus Muckenhopt weights cannot be used.
Regarding the issue of sign-change, the concept of $\mathtt{T}$-coercivity \cite{dhia2012t} has been used to study the existence of solutions for some linear equations \cite{dhia2012t,chesnel2013t}.
Unfortunately, this concept does not seem to be applicable here as the equation can be hyperbolic in a part of the domain.
Instead, we follow \cite{nicolopoulos2020,ciarlet2022mathematical} and use a limiting absorption principle to approximate solutions of \eqref{eq:main}.

Several numerical methods have been proposed to approximate solutions of sign-changing problems.
In \cite{chesnel2013t,chaumont2021generalized}, methods based on first-order Lagrange polynomials and $\mathtt{T}_h$-coercive meshes are devised.
In \cite{ciarlet2023optimal}, a method using ideas from optimal control was proposed.
However, these methods require the curve where \eqref{eq:main} changes type to be meshed exactly, which is not possible in our problem since the position of the interface is a priori unknown.
Therefore, we compute solutions of a problem regularized by adding a complex dissipation, as already performed in \cite{chesnel2013t}, which allows us to handle a non-scalar matrix $B(\xi)$ and to not have a priori knowledge of the interface.

The present article is structured as follows.
Section \ref{sec:continuous} focuses on proving the existence of solutions for a regularization of \eqref{eq:main} by using the the limiting absorption principle.
Section \ref{sec:discrete} proposes a numerical method to approximate the solutions of the  regularization of \eqref{eq:main}, supplemented by a convergence proof.
Finally, Section \ref{sec:test} compares a few numerical tests to experimental results.
Note that the limiting case when the dissipation parameter goes to zero is only studied from the numerical perspective.

\section{Approximate continuous problem}
\label{sec:continuous}
Let $\Omega \subset \mathbb{R}^2$ be an open bounded polygonal domain, that can be perfectly fitted by triangular meshes, with a Lipschitz boundary $\partial \Omega$.
The boundary is partitioned as $\partial \Omega = \partial \Omega_D \cup \partial \Omega_N$, where $\partial \Omega_D$ is relatively closed in $\partial \Omega$.
The exterior normal to $\partial \Omega$ is written as $n$.
A Dirichlet boundary condition $\xi_D \in H^\frac12(\partial \Omega_D ; \mathbb{R})$ is imposed strongly on $\partial \Omega_D$ and a Neumann boundary condition $g \in L^2(\partial \Omega_N;\R)$ is imposed weakly on $\partial \Omega_N$.
Let $V := H^1(\Omega;\mathbb{C})$ be a complex Sobolev space equipped with the usual Sobolev norm written as $\| \cdot \|_V$.
We define the solution set as $V_D := \{ \zeta \in V | \zeta = \xi_D \text{ on } \partial \Omega_D \}$, and $V_0 := \{ \zeta \in V | \zeta = 0 \text{ on } \partial \Omega_D \}$ as the associated homogeneous space.

\subsection{Rhombi-slits}
Following \cite{zheng2022continuum}, we focus in this paper on the specific case of rhombi slits.
Two geometric parameters $\alpha \le 0$, and $\beta  \ge 0$ are used to describe the geometry of a given unit cell.
Figure \ref{fig:sketch} shows two different unit cells.

\begin{figure}[!ht]
    \begin{subfigure}[b]{0.45\textwidth}
    \centering
        \includegraphics[scale=.5]{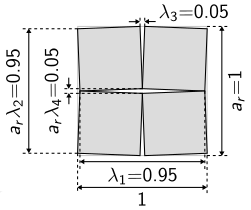}
		\caption{Auxetic kirigami.}
    	\label{fig:auxetic sketch}
    \end{subfigure}
    \hfill\begin{subfigure}[b]{0.45\textwidth}
    \centering
        \includegraphics[scale=.5]{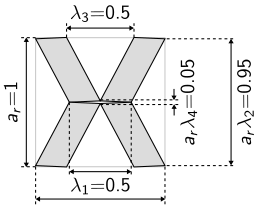}
		\caption{Non-auxetic kirigami.}
		\label{fig:non-auxetic sketch}
    \end{subfigure}
    \caption{Unit cells \cite{zheng2022continuum}.}
    \label{fig:sketch}
\end{figure}
Using the notations of Figure \ref{fig:sketch}, the geometric parameters are computed as
\[ \alpha = a_r(\lambda_4 - \lambda_2) \text{ and } \beta = \frac{\lambda_1 - \lambda_3}{a_r}. \]
Let $\xi \in L^2(\Omega;\R)$.
We define
\[ \mu_1(\xi) = \cos(\xi) - \alpha\sin(\xi), \quad \mu_2(\xi) = \cos(\xi) + \beta \sin(\xi), \] and \[ A_\mathrm{eff}(\xi) = \mu_1(\xi) e_1 \otimes e_1 + \mu_2(\xi) e_2 \otimes e_2. \]
Let $\Gamma(\xi) = \Gamma_{12}(\xi) e_1 \otimes e_2 + \Gamma_{12}(\xi) e_1 \otimes e_2$, where $(e_1,e_2)$ is the canonical basis of $\mathbb{R}^2$, and
\[ \Gamma_{12}(\xi) = -\frac{\mu_1'(\xi)}{\mu_2(\xi)}, \quad \Gamma_{21}(\xi) = \frac{\mu_2'(\xi)}{\mu_1(\xi)}. \]
Finally, one has
\[ B(\xi) = \begin{pmatrix}
-\Gamma_{21}(\xi) & 0 \\
0 & \Gamma_{12}(\xi) \\
\end{pmatrix}. \]
The strong formulation consists in searching for $\xi$ such that
\begin{equation}
\label{eq:strong form}
\left\{ \begin{aligned}
&-\mathrm{div}(B(\xi) \nabla \xi) = 0 \text{ a.e. in } \Omega, \\
& B(\xi) \nabla \xi \cdot n = g \text{ on } \partial \Omega_N, \\
& \xi = \xi_D \text{ on } \partial \Omega_D,
\end{aligned} \right.
\end{equation}

Let us show in practical cases the issues regarding sign-change and degeneracy mentioned above.
Figure \ref{fig:eigenvalues} reports the values of the eigenvalues of $B(\xi)$ for the patterns in Figure \ref{fig:sketch}.
\begin{figure}[!ht]
    \begin{subfigure}[b]{0.45\textwidth}
    \centering
        \includegraphics[scale=.5]{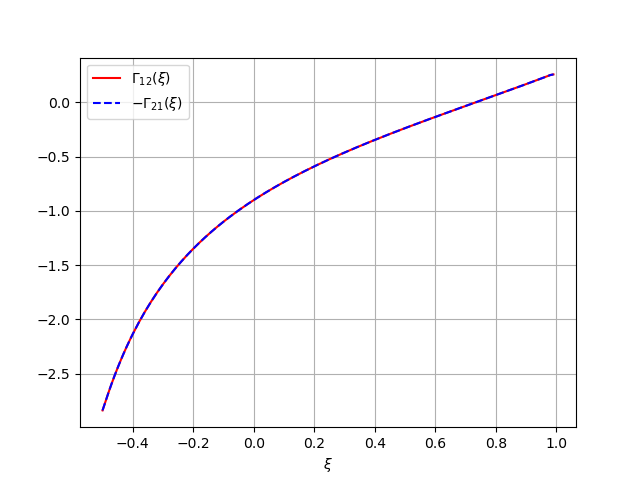}
		\caption{Auxetic kirigami.}
    	\label{fig:auxetic eigen}
    \end{subfigure}
    \hfill\begin{subfigure}[b]{0.45\textwidth}
    \centering
        \includegraphics[scale=.5]{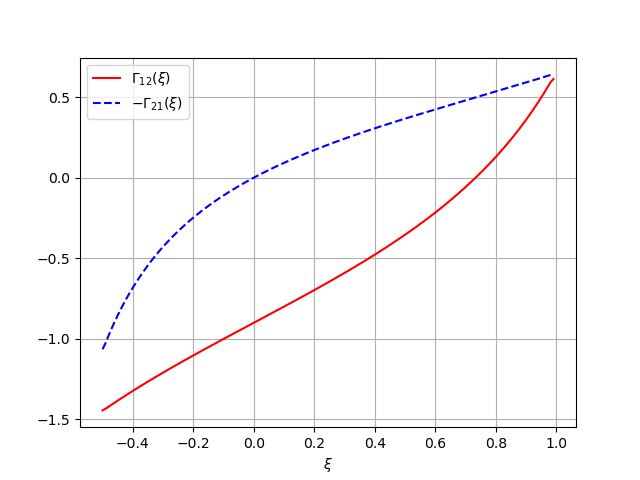}
		\caption{Non-auxetic kirigami.}
		\label{fig:non-auxetic eigen}
    \end{subfigure}
    \caption{Eigenvalues of $B(\xi)$.}
    \label{fig:eigenvalues}
\end{figure}
Figure \ref{fig:auxetic eigen} shows that $B(\xi)$ degenerates when $\xi \simeq 0.75$.
However, since $-\Gamma_{21}=\Gamma_{12}$ in the auxetic case, the pattern remains elliptic above or below that critical value.
The situation is different for Figure \ref{fig:non-auxetic eigen}.
$B(\xi)$ is elliptic for $\xi < 0$, degenerate at $\xi = 0$, hyperbolic for $\xi \lesssim 0.75$, degenerate again for $\xi \simeq 0.75$ and elliptic for $\xi \gtrsim 0.75$.

Also, note that for the pattern in Figure \ref{fig:non-auxetic sketch}, when $\xi = \frac\pi2$, $\mu_2(\xi) = 0$ and thus $\Gamma_{12} = +\infty$.
This type of issue happens typically when a pattern is fully open or fully closed, in which case the modeling performed in \cite{zheng2022continuum} does not apply any longer.
To prevent this type of issue, we let $\xi^- \in (-\frac\pi2,0)$, and $\xi^+ \in (0,\frac\pi2)$ and define the cut-off coefficients
\[ \widehat{\Gamma_{21}}(\xi) = \left\{ \begin{aligned} &\Gamma_{21}(\xi^+) \text{ if } \xi \ge \xi^+, \\
&\Gamma_{21}(\xi^-) \text{ if } \xi \le \xi^-, \\
&\Gamma_{21}(\xi) \text{ otherwise}, \end{aligned} \right.
\quad \widehat{\Gamma_{12}}(\xi) = \left\{ \begin{aligned} &\Gamma_{12}(\xi^+) \text{ if } \xi \ge \xi^+, \\
&\Gamma_{12}(\xi^-) \text{ if } \xi \le \xi^-, \\
&\Gamma_{12}(\xi) \text{ otherwise}, \end{aligned} \right. \]
and
\[ \hat{B}(\xi) = \begin{pmatrix}
-\widehat{\Gamma_{21}}(\xi) & 0 \\
0 & \widehat{\Gamma_{12}}(\xi) \\
\end{pmatrix}. \]
Therefore, $\hat{B}: L^2(\Omega) \to L^\infty(\Omega)$ and $\hat{B}$ is $K$-Lipschitz, where $K>0$, in the sense that $\hat{B} \in W^{1,\infty}(\mathbb R)$.
We choose $\xi^-$ and $\xi^+$ such that there exists $M > 0$, independent of $\xi$, 
\begin{equation}
\label{eq:bound on matrix}
\Vert \hat{B}(\xi)\Vert_{L^\infty(\Omega;\R)} \le M.
\end{equation}
Of course, $\hat{B}(\xi)$ is consistent with $B(\xi)$ only as long as $\xi^- \le \xi \le \xi^+$ a.e. in $\Omega$.
Let us now write a weak formulation of \eqref{eq:strong form}.
The bilinear form $a(\xi)$ is defined for $\zeta \in V$ and $\tilde{\zeta} \in V_0$ by
\begin{equation*}
a(\xi; \zeta, \tilde{\zeta}) := \int_\Omega \hat{B}(\xi) \nabla \zeta \cdot \overline{\nabla \tilde{\zeta}}.
\end{equation*}
Let $l$ be the linear form defined for $\tilde{\zeta} \in V_0$ by
\[ l(\tilde{\zeta}) := \int_{\partial \Omega_N} g \cdot \overline{\tilde{\zeta}}. \]

\subsection{Linear problem}
\label{sec:linear}
Let $\xi \in L^2(\Omega;\R)$ be fixed.
Because $\mathrm{det}(\hat{B}(\xi))$ can change sign, $a(\xi)$ is not coercive over $V_0 \times V_0$.
Therefore, one cannot apply the Lax--Milgram Lemma directly.
Instead, we follow \cite{chesnel2013t,ciarlet2022mathematical} and use a limiting absorption principle.
It consists in adding some dissipation to $a(\xi)$ through a purely imaginary term.
Let $\varepsilon > 0$ be a regularization parameter.
For $\zeta \in V$ and $\tilde{\zeta} \in V_0$, we define 
\[ a_\varepsilon(\xi; \zeta, \tilde{\zeta}) := \int_\Omega (\hat{B}(\xi) + i \varepsilon I) \nabla \zeta \cdot \overline{\nabla \tilde{\zeta}}, \]
where $i^2= -1$.

\begin{lemma}
\label{th:linear complex}
For all $\varepsilon > 0$, there exists a unique solution $\zeta_\varepsilon \in V_D$ to 
\begin{equation}
\label{eq:linear complex}
a_\varepsilon(\xi;\zeta_\varepsilon, \tilde{\zeta}) = l(\tilde{\zeta}), \quad \forall \tilde{\zeta} \in V_0.
\end{equation}
One has
\begin{equation}
\label{eq:bound}
\Vert \zeta_\varepsilon \Vert_V \le \left( \frac{M}\alpha + 1 \right) C_{tr}  \Vert \xi_D \Vert_{H^\frac12(\partial \Omega_D;\R)} + \frac{C_{tr}}{\alpha}\Vert g \Vert_{L^2(\partial \Omega_N;\R)},
\end{equation}
where $C_{tr} > 0$ is the constant from the trace theorem and $\alpha > 0$ is a coercivity constant described in the proof.
Note that both constants are independent of $\xi$.
\end{lemma}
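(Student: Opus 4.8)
The plan is to reduce \eqref{eq:linear complex} to a homogeneous problem on $V_0$ and then apply the Lax--Milgram Lemma to a rotated version of $a_\varepsilon$, the rotation being exactly what converts the complex dissipation $i\varepsilon I$ into genuine coercivity. First I would lift the Dirichlet data: since $\xi_D \in H^{\frac12}(\partial\Omega_D;\R)$, a bounded extension operator furnishes a (real-valued) $\zeta_D \in V_D$ with $\Vert \zeta_D\Vert_V \le C_{tr}\Vert \xi_D\Vert_{H^{\frac12}(\partial\Omega_D;\R)}$. Writing $\zeta_\varepsilon = \zeta_D + u$ with $u \in V_0$, problem \eqref{eq:linear complex} becomes: find $u \in V_0$ such that $a_\varepsilon(\xi;u,\tilde\zeta) = l(\tilde\zeta) - a_\varepsilon(\xi;\zeta_D,\tilde\zeta)$ for all $\tilde\zeta \in V_0$. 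The right-hand side is a bounded antilinear functional on $V_0$, because $l$ is bounded by the trace inequality and $a_\varepsilon(\xi;\zeta_D,\cdot)$ is bounded by \eqref{eq:bound on matrix}.

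The heart of the argument is coercivity, which cannot come from the real part: since $\hat B(\xi)$ is real, diagonal and sign-changing, the quantity $\Re\!\big(a_\varepsilon(\xi;v,v)\big) = \int_\Omega \hat B(\xi)\nabla v\cdot\overline{\nabla v}$ is real but of indefinite sign, so the classical Lax--Milgram Lemma does not apply. Instead I would exploit the imaginary part. Because the $\hat B(\xi)$-contribution is real, one has $\Im\!\big(a_\varepsilon(\xi;v,v)\big) = \varepsilon\Vert \nabla v\Vert_{L^2}^2$ for every $v \in V_0$. I would then introduce the rotated form $\hat a(v,w) := -i\,a_\varepsilon(\xi;v,w)$, which is still sesquilinear and continuous on $V_0\times V_0$ with constant $M+\varepsilon$ by \eqref{eq:bound on matrix}, and which satisfies $\Re\!\big(\hat a(v,v)\big) = \Im\!\big(a_\varepsilon(\xi;v,v)\big) = \varepsilon\Vert \nabla v\Vert_{L^2}^2$. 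Invoking the Poincaré inequality on $V_0$ (valid since $\partial\Omega_D$ carries the Dirichlet condition and has positive measure), namely $\Vert v\Vert_V^2 \le (1+C)\Vert \nabla v\Vert_{L^2}^2$, gives $\Re\!\big(\hat a(v,v)\big) \ge \frac{\varepsilon}{1+C}\Vert v\Vert_V^2$, i.e. genuine coercivity. Applying the complex Lax--Milgram Lemma to $\hat a$ with right-hand side $-i\big(l(\cdot)-a_\varepsilon(\xi;\zeta_D,\cdot)\big)$ yields a unique $u\in V_0$, hence a unique $\zeta_\varepsilon=\zeta_D+u\in V_D$ solving \eqref{eq:linear complex}.

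For the bound \eqref{eq:bound} I would test with $\tilde\zeta = u$ and take the imaginary part, which gives
\[ \varepsilon\Vert \nabla u\Vert_{L^2}^2 \le \big|a_\varepsilon(\xi;u,u)\big| = \big|l(u)-a_\varepsilon(\xi;\zeta_D,u)\big| \le |l(u)| + \big|a_\varepsilon(\xi;\zeta_D,u)\big|. \]
I would bound $|l(u)|$ by $\Vert g\Vert_{L^2(\partial\Omega_N;\R)}\Vert \nabla u\Vert_{L^2}$ via the trace inequality, and $|a_\varepsilon(\xi;\zeta_D,u)|$ by $M\Vert \nabla\zeta_D\Vert_{L^2}\Vert \nabla u\Vert_{L^2}$ using \eqref{eq:bound on matrix} together with $\Vert \zeta_D\Vert_V \le C_{tr}\Vert \xi_D\Vert_{H^{\frac12}(\partial\Omega_D;\R)}$ (the residual $\varepsilon$-part of the dissipation being absorbed into the $+1$ term). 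Dividing by $\Vert \nabla u\Vert_{L^2}$ produces a bound on $\Vert \nabla u\Vert_{L^2}$; converting the seminorm to the full norm through $\Vert u\Vert_V \le \sqrt{1+C}\,\Vert \nabla u\Vert_{L^2}$ and combining with $\Vert \zeta_\varepsilon\Vert_V \le \Vert u\Vert_V + \Vert \zeta_D\Vert_V$ then yields \eqref{eq:bound} after collecting the constants $C$ and $C_{tr}$.

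The main obstacle is precisely that the operator is not coercive in the usual real sense --- this is the sign-changing, possibly degenerate difficulty on which the whole paper is built --- so one must argue through the complex structure: the dissipation $i\varepsilon I$ controls only the imaginary part, and coercivity is recovered only after the $-i$ rotation (equivalently, one appeals to a Lax--Milgram variant whose coercivity hypothesis is placed on the imaginary part). A secondary point worth flagging is that the resulting constant blows up like $1/\varepsilon$, reflecting the expected loss of control as the regularization is removed; this will be the delicate quantity to track when one later passes to the limit $\varepsilon\to 0$ in the limiting absorption principle.
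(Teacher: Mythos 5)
Your proposal is correct and follows essentially the same route as the paper: lift the Dirichlet data, restore coercivity of $a_\varepsilon$ on $V_0$ by a complex rotation (you multiply by $-i$ so that $\Re(-i\,a_\varepsilon(v,v)) = \varepsilon\Vert\nabla v\Vert_{L^2}^2$, while the paper multiplies by $1-iC$ with $C > M/\varepsilon$ — an immaterial difference, and yours is arguably cleaner), apply the complex Lax--Milgram lemma, and obtain \eqref{eq:bound} by testing with the homogeneous part, taking imaginary parts, and using the Poincar\'e and trace inequalities exactly as the paper does. The only loose end — the extra $\varepsilon\Vert\nabla\zeta_D\Vert_{L^2}$ contribution from the dissipation acting on the lift, which you flag as absorbed into the $+1$ term — is glossed over in the paper's own proof in the same way, so this is not a gap relative to the original argument.
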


\begin{proof}
As $\xi_D \in H^\frac12(\partial \Omega_D;\R)$, there exists $\zeta_D \in H^1(\Omega;\R)$, $\zeta_D = \xi_D$ on $\partial \Omega_D$ and one has
\[ \| \zeta_D \|_V \le C_{tr} \| \xi_D \|_{H^{\frac12}(\partial \Omega_D; \mathbb R)}, \]
where $C_{tr} > 0$ is the trace constant, see \cite[Theorem B.52]{ern_guermond}.
We define the linear form $L(\xi)$ for $\tilde{\zeta} \in V_0$ by
\begin{equation*}
L(\xi; \tilde{\zeta}) = l(\tilde{\zeta}) - \int_\Omega \hat{B}(\xi) \nabla \zeta_D \cdot \overline{\nabla \tilde{\zeta}}.
\end{equation*}
Let $\hat{\zeta}, \tilde{\zeta} \in V_0$.
One has
\[ a_\varepsilon(\xi; \hat{\zeta} + \zeta_D, \tilde{\zeta}) - l(\tilde{\zeta}) = a_\varepsilon(\xi; \hat{\zeta}, \tilde{\zeta}) - L(\xi; \tilde{\zeta}). \]
Therefore, we want to apply the Lax--Milgram lemma to find $\hat{\zeta}_\varepsilon \in V_0$ such that
\[ a_\varepsilon(\xi; \hat{\zeta}_\varepsilon, \tilde{\zeta}) = L(\xi;\tilde{\zeta}), \quad \forall \tilde{\zeta} \in V_0. \]
$a_\varepsilon(\xi)$ is sesquilinear and continuous over $V_0 \times V_0$.
Let $\zeta \in V_0$, one has
\[ \Im(a_\varepsilon(\xi;\zeta, \zeta)) \ge \varepsilon \Vert \nabla \zeta \Vert_{L^2(\Omega; \C)}^2. \]
One also has
\[ \Re(a_\varepsilon(\xi;\zeta, \zeta)) \ge -M \Vert \nabla \zeta \Vert_{L^2(\Omega; \C)}^2. \]
Let $C > \frac{M}{\varepsilon}$, one has
\[ \Re((1-iC)a_\varepsilon(\xi;\zeta,\zeta)) \ge ( C\varepsilon -M) \Vert \nabla \zeta \Vert_{L^2(\Omega; \C)}^2. \]
Let $C_\Omega > 0$ be the Poincar\'e constant \cite[Lemma~B.66]{ern_guermond} so that, for any $\zeta \in V_0$,
\[ \| \zeta \|_{L^2(\Omega)} \le C_\Omega \| \nabla \zeta \|_{L^2(\Omega)}.  \]
Letting $\gamma := \frac{1-iC}{\sqrt{1+C^2}}$, $|\gamma| = 1$, one has
\[ \Re(\gamma a_\varepsilon(\xi;\zeta,\zeta)) \ge \frac{C \varepsilon - M}{\sqrt{1+C^2}} \frac1{1+C_\Omega^2} \| \zeta \|^2_V =:  \alpha \| \zeta \|^2_V, \]
which we use as a definition for $\alpha$.
Therefore, $a_\varepsilon(\xi)$ is coercive over $V_0 \times V_0$.
As $L(\xi)$ is continuous over $V_0$, we apply the Lax--Milgram lemma \cite[Lemma~25.2]{ern2021finiteII} and get the existence of a unique $\hat{\zeta_\varepsilon} \in V_0$ such that $\zeta_\varepsilon := \hat{\zeta}_\varepsilon + \zeta_D$ solves \eqref{eq:linear complex}.
Note that this $\hat{\zeta_\varepsilon}$ depends on $\xi$ and $\varepsilon$.

Let us now prove the bound \eqref{eq:bound}.
As a consequence of the Lax--Milgram lemma, one has
\begin{align*}
\Vert \zeta_\varepsilon \Vert_V = \Vert \hat{\zeta}_\varepsilon + \zeta_D \Vert_V \le \|\zeta_D \|_V + \|\hat{\zeta}_\varepsilon \|_V & \le \|\zeta_D \|_V + \frac1\alpha \| L(\xi) \|_{V_0'}, \\
&  \le \left( 1 + \frac{M}\alpha \right) \| \zeta_D \Vert_V + \frac{C_{tr}}\alpha \|g \|_{L^2(\partial \Omega_N;\mathbb R)}, \\ 
& \le \left( 1 + \frac{M}\alpha \right) C_{tr} \Vert \xi_D \Vert_{H^\frac12(\partial \Omega_D;\R)} + \frac{C_{tr}}\alpha \|g \|_{L^2(\partial \Omega_N;\mathbb R)},
\end{align*}
where $\| \cdot \|_{V_0'}$ is the dual norm associated with $V_0$ endowed with the norm of $V$.
\end{proof}

\subsection{Nonlinear problem}
\label{sec:nonlinear}
Using the results from Section \ref{sec:linear}, we prove that there exists a solution to the approximate problem.

\begin{proposition}
For $\varepsilon > 0$, there exists $\xi_\varepsilon \in V_D$ solution of
\begin{equation}
\label{eq:fix point}
a_\varepsilon(\Re(\xi_\varepsilon); \xi_\varepsilon, \tilde{\zeta}) = l(\tilde{\zeta}), \quad \forall \tilde{\zeta} \in V_0.
\end{equation}
\end{proposition}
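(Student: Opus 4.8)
The plan is to recast \eqref{eq:fix point} as a genuine fixed-point problem solved by Schauder's theorem, using Lemma \ref{th:linear complex} as the linear solver that has already been established. Concretely, I would define the solution map $\Phi : L^2(\Omega;\R) \to L^2(\Omega;\R)$ by $\Phi(\xi) := \Re(\zeta_\varepsilon)$, where $\zeta_\varepsilon \in V_D$ is the unique solution of \eqref{eq:linear complex} associated with the frozen coefficient $\hat B(\xi)$ provided by Lemma \ref{th:linear complex}. This map is well defined because $\zeta_\varepsilon \in V_D \subset H^1(\Omega;\C)$, so $\Re(\zeta_\varepsilon) \in H^1(\Omega;\R) \subset L^2(\Omega;\R)$. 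A fixed point $\xi^\star = \Phi(\xi^\star)$ would then furnish the desired object: setting $\xi_\varepsilon$ equal to the linear solution for $\xi^\star$ gives $\Re(\xi_\varepsilon) = \xi^\star$, so that $\hat B(\Re(\xi_\varepsilon)) = \hat B(\xi^\star)$ and $\xi_\varepsilon$ solves exactly \eqref{eq:fix point}.

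To apply Schauder's theorem, I would first exhibit a closed convex set that $\Phi$ maps into itself with relatively compact image. The bound \eqref{eq:bound} is uniform in $\xi$, so the entire range of $\Phi$ lies in the $V$-ball of radius $R_\varepsilon$, where $R_\varepsilon$ denotes the right-hand side of \eqref{eq:bound}. Since $\Vert \cdot \Vert_{L^2(\Omega;\R)} \le \Vert \cdot \Vert_V$, the closed ball $C := \{ \xi \in L^2(\Omega;\R) : \Vert \xi \Vert_{L^2(\Omega;\R)} \le R_\varepsilon \}$ satisfies $\Phi(C) \subseteq C$; moreover $\Phi(C)$ is bounded in $H^1(\Omega;\R)$, hence relatively compact in $L^2(\Omega;\R)$ by the Rellich--Kondrachov theorem. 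The self-map property and the compactness thus come essentially for free, thanks to the $\xi$-independence of the constants in Lemma \ref{th:linear complex}.

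The main obstacle is the continuity of $\Phi$ on $L^2(\Omega;\R)$. Given $\xi_n \to \xi$ in $L^2(\Omega;\R)$, write $\zeta_n$ and $\zeta$ for the corresponding linear solutions; since $\zeta_n - \zeta \in V_0$, subtracting the two instances of \eqref{eq:linear complex} gives
\[
a_\varepsilon(\xi_n; \zeta_n - \zeta, \tilde{\zeta}) = \int_\Omega \big(\hat B(\xi) - \hat B(\xi_n)\big)\nabla\zeta \cdot \overline{\nabla\tilde{\zeta}}, \qquad \forall \tilde{\zeta} \in V_0.
\]
Testing with $\tilde{\zeta} = \zeta_n - \zeta$ and reusing the coercivity estimate of Lemma \ref{th:linear complex} (the $(1-iC)$ rotation, whose constant $C\varepsilon - M$ is independent of $\xi$), followed by Cauchy--Schwarz, yields a bound of the form
\[
\Vert \nabla(\zeta_n - \zeta)\Vert_{L^2(\Omega;\C)} \le C_\varepsilon \, \big\Vert (\hat B(\xi) - \hat B(\xi_n))\nabla\zeta \big\Vert_{L^2(\Omega;\C)},
\]
with $C_\varepsilon > 0$ independent of $n$. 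It then remains to show the right-hand side tends to $0$, and this is where the cut-off is essential: $\hat B$ is bounded by $M$ and $K$-Lipschitz, so the integrand is dominated by $4M^2 |\nabla\zeta|^2 \in L^1(\Omega)$. Extracting a subsequence along which $\xi_n \to \xi$ almost everywhere and invoking dominated convergence forces this quantity to $0$; a standard subsequence argument then promotes the conclusion to the full sequence, and Poincar\'e's inequality on $V_0$ upgrades the gradient bound to $\zeta_n \to \zeta$ in $V$, whence $\Phi(\xi_n) = \Re(\zeta_n) \to \Re(\zeta) = \Phi(\xi)$ in $L^2(\Omega;\R)$.

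With $\Phi : C \to C$ continuous and $\Phi(C)$ relatively compact, Schauder's fixed-point theorem provides $\xi^\star \in C$ with $\Phi(\xi^\star) = \xi^\star$, and the construction of the first paragraph then delivers the sought $\xi_\varepsilon \in V_D$ solving \eqref{eq:fix point}. I expect the only genuinely delicate point to be the continuity step, and in particular the passage to an almost-everywhere convergent subsequence before applying dominated convergence; the self-map and compactness properties are immediate consequences of the uniform estimate already proved in Lemma \ref{th:linear complex} together with the compact Sobolev embedding.
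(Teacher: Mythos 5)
Your proof is correct, and although it shares the paper's overall strategy --- freeze the coefficient, use Lemma \ref{th:linear complex} as the linear solver, and apply Schauder --- it differs genuinely at the two delicate points. The paper runs the fixed point in $V$: it sets $B=\{\zeta\in V_D \mid \zeta \text{ verifies } \eqref{eq:bound}\}$, asserts this bounded closed convex set is \emph{compact} in $V$ (which fails in an infinite-dimensional space), and proves continuity of the solution map by extracting a weakly convergent subsequence of $\zeta_n$, identifying the limit through uniqueness of the linear solution, and upgrading to strong $V$-convergence via the imaginary part of $a_\varepsilon(\Re(\xi_n);\zeta_n-\zeta,\zeta_n-\zeta)$. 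You instead run the fixed point in $L^2(\Omega;\R)$ and obtain Schauder's compactness hypothesis honestly: the $\xi$-uniform bound \eqref{eq:bound} confines $\Phi(C)$ to a fixed $V$-ball, and Rellich--Kondrachov makes it relatively compact in $L^2$ --- this repairs exactly the weak point in the paper's argument (all the needed ingredients are implicitly present there, but the justification given is wrong). Your continuity step is also different and more quantitative: coercivity yields $\Vert\nabla(\zeta_n-\zeta)\Vert_{L^2}\le C_\varepsilon\Vert(\hat{B}(\xi)-\hat{B}(\xi_n))\nabla\zeta\Vert_{L^2}$, and domination by $4M^2|\nabla\zeta|^2$ plus an a.e.-convergent subsequence and dominated convergence sends the right side to zero; this correctly exploits the cut-off, whereas the H\"older route the paper uses in Lemma \ref{th:unique} would require integrability of $\nabla\zeta$ beyond $L^2$. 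What the paper's weak-convergence route buys is that it transfers almost verbatim to the discrete convergence proof; what yours buys is a genuinely compact map, an explicit modulus of continuity, and no appeal to uniqueness of weak limits.
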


\begin{proof}
Let $\varepsilon > 0$ and $T_\varepsilon : L^2(\Omega; \C) \to V$ be the map such that for $\xi \in L^2(\Omega; \C)$, $T_\varepsilon \xi = \zeta_\varepsilon \in V_D$ is the unique solution of \[ a_\varepsilon(\Re(\xi);\zeta_\varepsilon, \tilde{\zeta}) = l(\tilde{\zeta}), \quad \forall \tilde{\zeta} \in V_0, \]
which exists due to Lemma \ref{th:linear complex}.
We will use the Schauder fixed point theorem \cite{gilbarg2015elliptic}.
The first step is to find an invariant domain for $T_\varepsilon$.
Let $B = \{\zeta \in V_D \ | \ \zeta \text{ verifies } \eqref{eq:bound} \}$, which is a compact convex set in $V$.
Using Lemma \ref{th:linear complex}, one has $T_\varepsilon B \subset B$.

Let us now show that $T_\varepsilon$ is continuous over $B$ for $\Vert \cdot \Vert_V$.
Let $\left(\xi_n\right)_{n \in \mathbb{N}}$ be a sequence of $B$ such that $\xi_n \mathop{\longrightarrow} \limits_{n \to +\infty} \xi \in B$ in $V$.
Let $\zeta = T_\varepsilon \xi$, and $\zeta_n = T_\varepsilon \xi_n$, for $n \in \mathbb{N}$.
We want to prove that $\zeta_n \mathop{\longrightarrow} \limits_{n \to +\infty} \zeta$ in $V$.
$\left( \zeta_n \right)_n$ is bounded in $V_D$, and thus there exists $\hat{\zeta} \in V_D$, up to a subsequence,  $\zeta_n \mathop{\rightharpoonup} \limits_{n \to +\infty} \hat{\zeta}$ weakly in $V$.
Using the classical Sobolev injection $V \subset L^2(\Omega;\C)$, $\zeta_n \to \hat{\zeta}$ strongly in $L^2(\Omega;\C)$, when $n \to +\infty$.
Let us show that $\hat{\zeta}$ is a solution of \eqref{eq:linear complex}.
Let $\varphi \in C^\infty(\Omega;\mathbb C)^2$ with $\varphi = 0$ on $\partial \Omega_D$.
By definition, one has
\[ \begin{aligned}
a_\varepsilon(\Re(\xi);\zeta_n, \varphi) - l(\varphi) & = a_\varepsilon(\Re(\xi);\zeta_n,\varphi) + a_\varepsilon(\Re(\xi_n);\zeta_n,\varphi), \\
& = \int_\Omega \left[ \hat{B}(\Re(\xi)) - \hat{B}(\Re(\xi_n)) \right] \nabla \zeta_n \cdot \nabla \varphi.
\end{aligned} \]
Therefore,
\[ \begin{aligned}
\left| a_\varepsilon(\Re(\xi);\zeta_n, \varphi) - l(\varphi) \right| & \le \left\|  \left[ \hat{B}(\Re(\xi)) - \hat{B}(\Re(\xi_n)) \right] \nabla \zeta_n \right\|_{L^1(\Omega)} \| \nabla \varphi \|_{L^\infty(\Omega)}, \\
& \le K \| \xi - \xi_n \|_{L^2(\Omega)} \|\zeta_n \|_V \| \nabla \varphi \|_{L^\infty(\Omega)}, \\
& \le C K\| \xi - \xi_n \|_{L^2(\Omega)} \| \nabla \varphi \|_{L^\infty(\Omega)} \mathop{\longrightarrow}_{n \to +\infty} 0,
\end{aligned} \]
since $\hat{B}$ is $K$-Lipschitz, $\|\zeta_n \|_V \le C$ and $\xi_n \to \xi$ strongly in $V$.
Note that since $\zeta_n \rightharpoonup \hat{\zeta}$ weakly in $V$, one has 
\[ a_\varepsilon(\Re(\xi); \hat{\zeta}, \varphi) = l(\varphi). \]
We finish proving that $\hat{\zeta} \in V_D$ is a solution of \eqref{eq:linear complex} by using a density argument.
Let $\tilde{\zeta} \in V_0$.
There exists a sequence $(\varphi_n)_{n \in \mathbb N} \in \mathcal C^\infty(\Omega; \mathbb C)$, with compact support in $\partial \Omega_D$, such that $\| \tilde{\zeta} - \varphi_n \Vert_V \to 0$, as $n \to \infty$.
One  has
\begin{equation}
\label{eq:intermediate fix point}
a_\varepsilon(\Re(\xi); \hat{\zeta}, \tilde{\zeta}) - l(\tilde{\zeta})
= a_\varepsilon(\Re(\xi); \hat{\zeta}, \varphi_n) - l(\varphi_n) + a_\varepsilon(\Re(\xi); \hat{\zeta}, \tilde{\zeta} - \varphi_n) - l(\tilde{\zeta} - \varphi_n ).
\end{equation}
The sum of the first two terms in the right-hand side of \eqref{eq:intermediate fix point} is zero since $\varphi_n \in \mathcal C^\infty(\Omega; \mathbb C)$ with $\varphi_n = 0$ on $\partial \Omega_D$.
The last two terms in the right-hand side of \eqref{eq:intermediate fix point} converge to zero since $\zeta_n \to \tilde{\zeta}$ strongly in $V$.
Therefore, $\hat{\zeta} \in V_D$ solves \eqref{eq:linear complex}, which has for unique solution $\zeta \in V_D$, and thus $\hat{\zeta} = \zeta$.

Let us show that $\zeta_n \to \zeta$ strongly in $V$ when $n \to +\infty$.
One has
\begin{align*}
i \varepsilon \Vert \nabla \zeta_n - \nabla \zeta \Vert_{L^2(\Omega; \C)}^2 + \int_\Omega \hat{B}(\Re(\xi_n)) \nabla(\zeta_n - \zeta) \cdot \overline{\nabla(\zeta_n - \zeta)} & = a_\varepsilon(\Re(\xi_n); \zeta_n - \zeta, \zeta_n - \zeta), \\
& =a_\varepsilon(\Re(\xi_n); \zeta_n, \zeta_n - \zeta) - a_\varepsilon(\Re(\xi_n); \zeta, \zeta_n - \zeta), \\
& = l(\zeta_n - \zeta) - a_\varepsilon(\Re(\xi_n); \zeta, \zeta_n - \zeta), \\
& \mathop{\longrightarrow}_{n \to \infty} 0,
\end{align*}
as $\xi_n \to \xi$ strongly in $L^2(\Omega;\C)$, $\zeta_n \rightharpoonup \zeta$ weakly in $V$, and $\zeta_n \to \zeta$ strongly in $L^2(\Omega;\C)$.
Taking the imaginary part, one has $\zeta_n \to \zeta$ strongly in $V$.
Also, as the solution of \eqref{eq:linear complex} is unique, the full sequence $\left( \zeta_n \right)_n$ converges towards $\zeta$ in V.
We conlude with the Schauder fixed point theorem.
\end{proof}

\begin{remark}
Typical techniques for proving the uniqueness of the solution for small enough boundary condition as in \cite[Section~8.7]{brenner} do not apply here since they would require $\xi \in W^{1,p}(\Omega)$, with $p > 2$.
We cannot expect such regularity given the fact that we only have $\hat{B}(\xi) \in L^\infty(\Omega)$.
\end{remark}

\section{Approximate Discrete problem}
\label{sec:discrete}
This section is dedicated to approximating the solutions of \eqref{eq:fix point}.

\subsection{Discrete setting}
Let $\left(\mathcal{T}_h \right)_h$ be a family of quasi-uniform and shape regular triangulations \cite{ern_guermond}, perfectly fitting $\Omega$.
For a cell $c \in \mathcal{T}_h$, let $h_c := \mathrm{diam}(c)$ be the diameter of c.
Then, we define $h := \max_{c \in \mathcal{T}_h} h_c$ as the mesh parameter for a given triangulation $\mathcal{T}_h$.
Let $V_h := \mathbb{P}^1(\mathcal{T}_h;\C)$, the space of affine Lagrange polynomials with complex values.
As the solutions we try to compute are only $H^1$, we use the Scott--Zhang interpolator written as $\mathcal{I}_h$, see \cite{ern_guermond} for details.
As $\xi_D \in H^\frac12(\partial \Omega_D;\R)$, there exists $\zeta_D \in H^1(\Omega;\R)$, $\zeta_D = \xi_D$ on $\partial \Omega_D$.
We define the following solution set as
\[ V_{hD} := \{\zeta_h \in V_{h}; \Re(\zeta_h) =  \mathcal{I}_h \zeta_D \text{ and } \Im(\zeta_h) = 0 \text{ on } \partial \Omega_D  \}, \]
and its associated homogeneous space as
\[ V_{h0} := \{\zeta_h \in V_{h}; \zeta_h =  0 \text{ on } \partial \Omega_D  \}. \]

\subsection{Discrete problem}
\label{sec:discrete pb}
Let $\varepsilon > 0$. 
We focus on the discrete nonlinear problem which consists in searching for $\xi_h \in V_{hD}$,
\begin{equation}
\label{eq:discrete nonlinear}
a_\varepsilon(\Re(\xi_h); \xi_h, \tilde{\zeta}_h) = l(\tilde{\zeta}_h), \quad \forall \tilde{\zeta}_h \in V_{h0}.
\end{equation}

\begin{proposition}
\label{th:discrete nonlinear}
There exists a solution $\xi_h \in V_{hD}$ to \eqref{eq:discrete nonlinear}.
\end{proposition}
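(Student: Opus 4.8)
The plan is to mirror the continuous existence proof (the Proposition in Section~\ref{sec:nonlinear}) in the discrete finite-dimensional setting, applying a Brouwer-type fixed point theorem to the discrete solution map. First I would define the map $T_{\varepsilon,h}: V_{hD} \to V_{hD}$ that sends $\xi_h$ to the unique solution $\zeta_h \in V_{hD}$ of the discrete linear problem \eqref{eq:discrete linear}; this map is well-defined by Lemma~\ref{th:discrete linear}. A fixed point of $T_{\varepsilon,h}$ is precisely a solution of the nonlinear problem \eqref{eq:discrete nonlinear}, since at a fixed point one has $\xi_h = T_{\varepsilon,h}\xi_h$, meaning $a_\varepsilon(\Re(\xi_h); \xi_h, \tilde{\zeta}_h) = l(\tilde{\zeta}_h)$ for all $\tilde{\zeta}_h \in V_{h0}$.

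The second step is to exhibit an invariant convex compact set. Using the uniform bound \eqref{eq:discrete bound} from Lemma~\ref{th:discrete linear}, I would set $B_h := \{\zeta_h \in V_{hD} \ | \ \zeta_h \text{ verifies } \eqref{eq:discrete bound}\}$. This is a closed bounded convex subset of the finite-dimensional space $V_{hD}$, hence compact. Since the right-hand side of \eqref{eq:discrete bound} does not depend on the argument $\xi_h$ (only on the fixed data $\xi_D$ and $g$), Lemma~\ref{th:discrete linear} guarantees $T_{\varepsilon,h} B_h \subset B_h$.

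The third step is continuity of $T_{\varepsilon,h}$ on $B_h$ for the norm $\Vert \cdot \Vert_V$. Here I would take a sequence $\xi_h^{(n)} \to \xi_h$ in $B_h$ and argue exactly as in the continuous Proposition: since $\hat{B}$ is $K$-Lipschitz and therefore continuous, and since $B_h$ lives in a finite-dimensional space where all norms are equivalent (so $L^2$ and $V$ convergence coincide), the sesquilinear form $a_\varepsilon(\Re(\xi_h^{(n)}); \cdot, \cdot)$ converges to $a_\varepsilon(\Re(\xi_h); \cdot, \cdot)$ on the fixed finite-dimensional test space $V_{h0}$. Testing the difference $\zeta_h^{(n)} - \zeta_h$ against itself and taking the imaginary part (as in the continuous proof, where the imaginary part of $a_\varepsilon$ controls $\varepsilon \Vert \nabla \cdot \Vert_{L^2}^2$) yields $\zeta_h^{(n)} \to \zeta_h = T_{\varepsilon,h}\xi_h$ in $V$. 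Once continuity on the compact convex invariant set $B_h$ is established, the Brouwer fixed point theorem delivers a fixed point $\xi_h \in B_h$, which is the desired solution of \eqref{eq:discrete nonlinear}.

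The routine analytic difficulties of the continuous case disappear here because the finite dimension trivializes the compactness and weak-convergence steps; the one point requiring genuine care is the continuity argument, ensuring that the nonlinear dependence of $\hat{B}(\Re(\xi_h))$ on $\xi_h$ passes to the limit correctly. Since everything is finite-dimensional, this reduces to the continuity of $\hat{B}$ together with the equivalence of norms, so I expect no substantive obstacle — the main work is simply transcribing the continuous argument into the discrete spaces $V_{hD}$ and $V_{h0}$.
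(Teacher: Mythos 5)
Your proposal matches the paper's proof essentially step for step: the same fixed point map $T_h$ sending $\xi_h$ to the solution of \eqref{eq:discrete linear}, the same invariant set $B$ of functions verifying \eqref{eq:discrete bound}, a continuity argument for $T_h$ based on the $K$-Lipschitz continuity of $\hat{B}$ together with the $\varepsilon$-coercivity of the imaginary part (the paper makes this quantitative via a generalized H\"older inequality, yielding a Lipschitz bound on $T_h$, while you invoke finite-dimensional norm equivalence, but the mechanism is the same), and the conclusion via the Brouwer fixed point theorem. The argument is correct and takes the same route as the paper.
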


\begin{proof}
Let $\xi_h \in V_{hD}$.
Let us first focus on the following linear problem: search for $\zeta_h \in V_{hD}$, such that
\begin{equation}
\label{eq:discrete linear}
a_\varepsilon(\Re(\xi_h); \zeta_h, \tilde{\zeta}_h) = l(\tilde{\zeta}_h), \quad \forall \tilde{\zeta}_h \in V_{h0}.
\end{equation}
Using the Lax--Milgram lemma (similarly to the proof of Lemma \ref{th:linear complex}), we can show that there exists a unique solution $\zeta_h \in V_{hD}$ to \eqref{eq:discrete linear}, and one has
\begin{equation}
\label{eq:discrete bound}
\Vert \zeta_h \Vert_V \le C' C_{tr}\left(1 + \frac{M}\alpha \right) \Vert \xi_D \Vert_{H^\frac12(\partial \Omega_D;\R)} + \frac{C' C_{tr}}\alpha \Vert g \Vert_{L^2(\partial \Omega_N;\R)},
\end{equation}
where $C_{tr} > 0$ is the constant from the trace theorem, $\alpha$ is the coercivity constant, and $C' > 0$ is the interpolation constant, see \cite[Lemma~1.130]{ern_guermond}.

We define the fixed point map $T_h : V_{hD} \to V_{hD}$, such that $T_h \xi_h = \zeta_h$, where $\zeta_h$ is the solution of \eqref{eq:discrete linear}.
Let 
\[ B = \left\{ \xi_h \in V_{hD} \ | \ \xi_h \text{ verifies } \eqref{eq:discrete bound} \right\}. \]
Using \eqref{eq:discrete bound}, one has $T_h B \subset B$.
Let us show that $T_h$ is continuous over $B$.
Let $\left(\xi_n\right)_{n \in \mathbb{N}}$ be a sequence of $B$ such that $\xi_n \mathop{\longrightarrow} \limits_{n \to +\infty} \xi \in B$ strongly in $V$.
Let $\zeta = T_h \xi$, and $\zeta_n = T_h \xi_n$, for $n \in \mathbb{N}$.
We want to prove that $\zeta_n \mathop{\longrightarrow} \limits_{n \to +\infty} \zeta$ strongly in $V$.

$\left( \zeta_n \right)_n$ is bounded in $B$, and thus there exists $\hat{\zeta} \in V_{hD}$, up to a subsequence,  $\zeta_n \mathop{\rightharpoonup} \limits_{n \to +\infty} \hat{\zeta}$ weakly in $V$.
By Sobolev injection, $\zeta_n \to \hat{\zeta}$ strongly in $L^2(\Omega;\C)$, when $n \to +\infty$.
Let us show that $\hat{\zeta} = \zeta$ by showing that $\hat{\zeta}$ solves
\[ a_\varepsilon(\Re(\xi);\hat{\zeta}, \tilde{\zeta_h}) = l(\tilde{\zeta_h}), \quad \forall \tilde{\zeta}_h \in V_{h0}. \] 
Let $\tilde{\zeta}_h \in V_{h0}$.
One has
\begin{equation}
\begin{aligned}
\left| a_\varepsilon(\Re(\xi); \zeta_n, \tilde{\zeta}_h) - l(\tilde{\zeta}_h) \right| & = \left| a_\varepsilon(\Re(\xi); \zeta_n, \tilde{\zeta}_h) - a_\varepsilon(\Re(\xi_n); \zeta_n, \tilde{\zeta}_h) \right|, \\
& = \left|\int_\Omega \left[ \hat{B}(\Re(\xi)) - \hat{B}(\Re(\xi_n)) \right] \nabla \zeta_n \cdot \tilde{\zeta}_h \right|, \\
& \le K \| (\xi - \xi_n) \nabla \zeta_n \|_{L^1(\Omega;\mathbb C)} \| \nabla \tilde{\zeta}_h \|_{L^\infty(\Omega;\mathbb C)}, \\
& \le K \| \xi - \xi_n \|_{L^2(\Omega)} C \| \nabla \tilde{\zeta}_h \|_{L^\infty(\Omega;\mathbb C)} \mathop{\longrightarrow}_{n \to +\infty} 0,
\end{aligned}
\end{equation}
since $\hat{B}$ is $K$-Lipschitz and $(\zeta_n)_n$ is bounded in $H^1(\Omega)$.
Note that since $\zeta_n \rightharpoonup \hat{\zeta}$ weakly in $V$, one has
\[ a_\varepsilon(\Re(\xi); \zeta_n, \tilde{\zeta}_h) \mathop{\longrightarrow}_{n \to +\infty} a_\varepsilon(\Re(\xi); \hat{\zeta}, \tilde{\zeta}_h), \]
and thus $\hat{\zeta} = T_h \xi = \zeta$.

Let us show that $\zeta_n \to \zeta$ strongly in $V$ when $n \to +\infty$.
One has
\begin{align*}
&i \varepsilon \Vert \nabla \zeta_n - \nabla \zeta \Vert_{L^2(\Omega; \C)}^2 + \int_\Omega \hat{B}(\Re(\xi_n)) \nabla(\zeta_n - \zeta) \cdot \overline{\nabla(\zeta_n - \zeta)} \\
= & \, a_\varepsilon(\Re(\xi_n); \zeta_n - \zeta, \zeta_n - \zeta), \\
= & \, a_\varepsilon(\Re(\xi_n); \zeta_n, \zeta_n - \zeta) - a_\varepsilon(\Re(\xi_n); \zeta, \zeta_n - \zeta), \\
= & \, l(\zeta_n - \zeta) - a_\varepsilon(\Re(\xi_n); \zeta, \zeta_n - \zeta) \mathop{\longrightarrow}_{n \to \infty} 0, \\
\end{align*}
as $\| \xi_n \|_V \le C$, $\zeta_n \rightharpoonup \zeta$ weakly in $V$, and $\zeta_n \to \zeta$ strongly in $L^2(\Omega;\C)$.
By taking the imaginary part, one has $\zeta_n \to \zeta$ strongly in $V$.
Also, as the solution of \eqref{eq:discrete linear} is unique, the full sequence $\left( \zeta_n \right)_n$ converges towards $\zeta$ in V.
Thus $T_h$ is continuous and the Brouwer fixed point \cite{brezis} gives the desired result.

\end{proof}

\subsection{Convergence}
We study the convergence of the method when $h \to 0$, for $\varepsilon > 0$ fixed.

\begin{theorem}
Let $\left( \xi_h \right)_{h > 0}$ be a sequence of solutions of \eqref{eq:discrete nonlinear}.
There exists $\xi_\varepsilon \in V_D$, solution of \eqref{eq:fix point}, such that, up to a subsequence,
\[ \xi_h \mathop{\longrightarrow}_{h \to 0} \xi_\varepsilon \text{ strongly in } V. \]
\end{theorem}

\begin{proof}
As $\left( \xi_h \right)_{h > 0}$ is bounded in $V_D$, according to Proposition \ref{th:discrete nonlinear}, there exists $\xi_\varepsilon \in V_D$, up to a subsequence,  $\xi_h\rightharpoonup \xi_\varepsilon$, weakly in $V$, when $h \to 0$.
Using a compact Sobolev injection, one has $\xi_h \to \xi_\varepsilon$, strongly in $L^2(\Omega;\C)$, when $h \to 0$.
We now want to show that $\xi_\varepsilon \in V_D$ is a solution of \eqref{eq:fix point}.

Let $\varphi \in C^\infty(\Omega;\mathbb C)$ with $\varphi = 0$ on $\partial \Omega_D$
For this entire proof, we use the interpolation operator, still written as $\mathcal I_h$ from \cite[Section~22.4]{ern2021finiteI}.
Note that due to the regularity of $\varphi$, one has $\nabla \mathcal I_h \varphi \to \nabla \varphi$ strongly in $L^2(\Omega;\mathbb C)$, as $h \to 0$, see \cite[Lemma~1.130]{ern_guermond}.
Therefore, when $h \to 0$, one has $l(\mathcal{I}_h \varphi) \to l(\varphi)$.
One also has
\begin{equation}
\label{eq:intermediate}
\begin{aligned}
& a_\varepsilon(\Re(\xi_\varepsilon);\xi_h,\mathcal I_h \varphi) - l(\mathcal I_h \varphi) \\
=& \, a_\varepsilon(\Re(\xi_\varepsilon);\xi_h,\mathcal I_h \varphi) - a_\varepsilon(\Re(\xi_h);\xi_h,\mathcal I_h \varphi) , \\
=& \int_\Omega [\hat{B}(\Re(\xi_\varepsilon)) - \hat{B}(\Re(\xi_h)) ] \nabla \xi_h \cdot \nabla \mathcal I_h \varphi. \\
\end{aligned}
\end{equation}
Therefore,
\begin{align*}
\left| a_\varepsilon(\Re(\xi_\varepsilon);\xi_h,\mathcal I_h \varphi) - l(\mathcal I_h \varphi) \right| 
& \le \left\Vert [\hat{B}(\Re(\xi_\varepsilon)) - \hat{B}(\Re(\xi_h))] \nabla \xi_h \right\Vert_{L^1(\Omega; \mathbb C)} \|\nabla \mathcal I_h \varphi \|_{L^\infty(\Omega; \mathbb C)}, \\
& \le C K \| \xi_h - \xi_\varepsilon \|_{L^2(\Omega)} C' \| \varphi \|_{W^{1,\infty}(\Omega)} \mathop{\longrightarrow}_{h \to 0} 0, 
\end{align*}
since $\hat{B}$ is $K$-Lipschitz, $\| \xi_h \|_V \le C$, $\xi_h \to \xi_\varepsilon$ strongly in $L^2$ and $C' > 0$ is the stability constant from \cite[Corollary~22.15]{ern2021finiteI}.
Since $\xi_h \rightharpoonup \xi_\varepsilon$ weakly in $V$, for all $\varphi \in C^\infty(\Omega; \mathbb C)$ with $\varphi = 0$ on $\partial \Omega_D$, one has
\begin{equation}
\label{eq:intermediate 4}
a_\varepsilon(\Re(\xi_\varepsilon); \xi_\varepsilon, \varphi) = l(\varphi). 
\end{equation}
We finish proving that $\xi_\varepsilon \in V_D$ is a solution of \eqref{eq:fix point} by using a density argument.
Let $\tilde{\zeta} \in V_0$.
There exists a sequence $(\varphi_n)_{n \in \mathbb N} \in \mathcal C^\infty(\Omega; \mathbb C)$, with $\varphi_n = 0$ on $\partial \Omega_D$, such that $\| \tilde{\zeta} - \varphi_n \Vert_V \to 0$, as $n \to \infty$.
One thus has
\begin{equation}
\label{eq:intermediate 5}
a_\varepsilon(\Re(\xi_\varepsilon); \xi_\varepsilon, \tilde{\zeta}) - l(\tilde{\zeta}) \\
= a_\varepsilon(\Re(\xi_\varepsilon); \xi_\varepsilon, \tilde{\zeta} - \varphi_n)
+ a_\varepsilon(\Re(\xi_\varepsilon); \xi_\varepsilon, \varphi_n) - l(\varphi_n) + l(\varphi_n - \tilde{\zeta}).
\end{equation}
Note that, using \eqref{eq:intermediate 4}, the sum of the middle two terms in \eqref{eq:intermediate 5} is exactly zero.
The last term in the right-hand side of \eqref{eq:intermediate 5} converges to zero since $\| \tilde{\zeta} - \varphi_n \Vert_V \to 0$, as $n \to \infty$.
Regarding the first term in the right-hand side of \eqref{eq:intermediate 5} , one has
\[ |a_\varepsilon(\Re(\xi_\varepsilon); \xi_\varepsilon, \tilde{\zeta} - \varphi_n)| \le (M + \varepsilon) \| \xi_\varepsilon \|_{H^1(\Omega)} \| \tilde{\zeta} - \varphi_n \Vert_V \to 0, \] 
as $n \to \infty$.
And therefore, $\xi_\varepsilon \in V_D$ is a solution of \eqref{eq:fix point}.

Let us now show the strong convergence of the gradients.
One has
\begin{align*}
&i \varepsilon \| \nabla \xi_h - \nabla \xi_\varepsilon \|^2_{L^2(\Omega;\mathbb C)} + \int_\Omega \hat{B}(\Re(\xi_h)) \ \nabla (\xi_h - \xi_\varepsilon) \nabla (\xi_h - \xi_\varepsilon) \\
= & \, a_\varepsilon(\Re(\xi_h);\xi_h - \xi_\varepsilon, \xi_h - \xi_\varepsilon) = a_\varepsilon(\Re(\xi_h);\xi_h, \xi_h - \xi_\varepsilon) - a_\varepsilon(\Re(\xi_h);\xi_\varepsilon, \xi_h - \xi_\varepsilon), \\
= & \, l(\xi_h - \xi_\varepsilon)  - a_\varepsilon(\Re(\xi_h);\xi_\varepsilon, \xi_h - \xi_\varepsilon).
\end{align*}
The first term converges to zero since $\xi_h \to \xi_\varepsilon$ strongly in $L^2(\Omega; \mathbb C)$.
The second term also converges to zero since $\xi_h$ is bounded in $V$ and $\nabla \xi_h \rightharpoonup \nabla \xi_\varepsilon$ weakly in $L^2(\Omega; \mathbb C)$.
Taking the imaginary part of the expression above, one gets that $\xi_h \to \xi_\varepsilon$ strongly in $V$.
\end{proof}

\section{Numerical tests}
\label{sec:test}
The numerical experiments consist in solving \eqref{eq:discrete nonlinear} for various boundary conditions and different material parameters.
Equation \eqref{eq:discrete nonlinear} is solved through Newton iterations.
Note that within the range of values of $\xi$ obtained in these numerical experiments, one has $\hat{B}(\xi) = B(\xi)$.
Therefore, no cut-off of the eigenvalues of $B(\xi)$ is necessary for the numerical experiments presented below.
The implementation is performed in\texttt{firedrake}, using its configuration for complex numbers, see \cite{firedrake}.
Then, $\gamma_h$, and $y_{\mathrm{eff},h}$ are computed from $\Re(\xi_h)$ through least-squares and Equations \eqref{eq:gamma} and \eqref{eq:yeff}.
These results are then compared to experimental results.
The regularization parameter $\varepsilon$ is chosen to be as small as possible while still providing a good agreement with the experimental results.
The experimental data are recovered using the image recognition capabilities of \texttt{matlab},
see \cite[Supplementary Material~7]{zheng2022continuum}.

\subsection{Auxetic kirigami}
\label{sec:elliptic}
The pattern is sketched in Figure \ref{fig:auxetic sketch}.
For this pattern, one has $\alpha=-0.9$ and $\beta=0.9$.
In \cite{czajkowski2022conformal,zheng2022continuum}, a solution method using quasi-conformal maps was proposed to compute patterns for which $\alpha=-\beta$.
The domain is $\Omega = (0,L) \times (0,L)$, where $L=15$.
Homogeneous Neumann boundary conditions are imposed on the top and bottom surfaces and the nonhomogeneous Dirichlet condition $\xi_D$ is imposed on the left and right surfaces.

Figure \ref{fig:auxetic results num} shows the post-processing of an experiment that we will try to replicate numerically.
We notice that, for this experiment, $\xi \in [0, \frac\pi3]$.
In that case, one has $-\Gamma_{21}(\xi) > 0$ and thus \eqref{eq:strong form} is strictly elliptic.
The regularization parameter is thus chosen as $\varepsilon = 0$, as this problem remains elliptic.

We consider a mesh of size $h=0.005$, with $119,817$ dofs.
Figure \ref{fig:auxetic results exp} shows the experimental results.
\begin{figure}[!htp]
\centering
\begin{subfigure}[b]{0.45\textwidth}
        \centering
        \includegraphics[scale=.15]{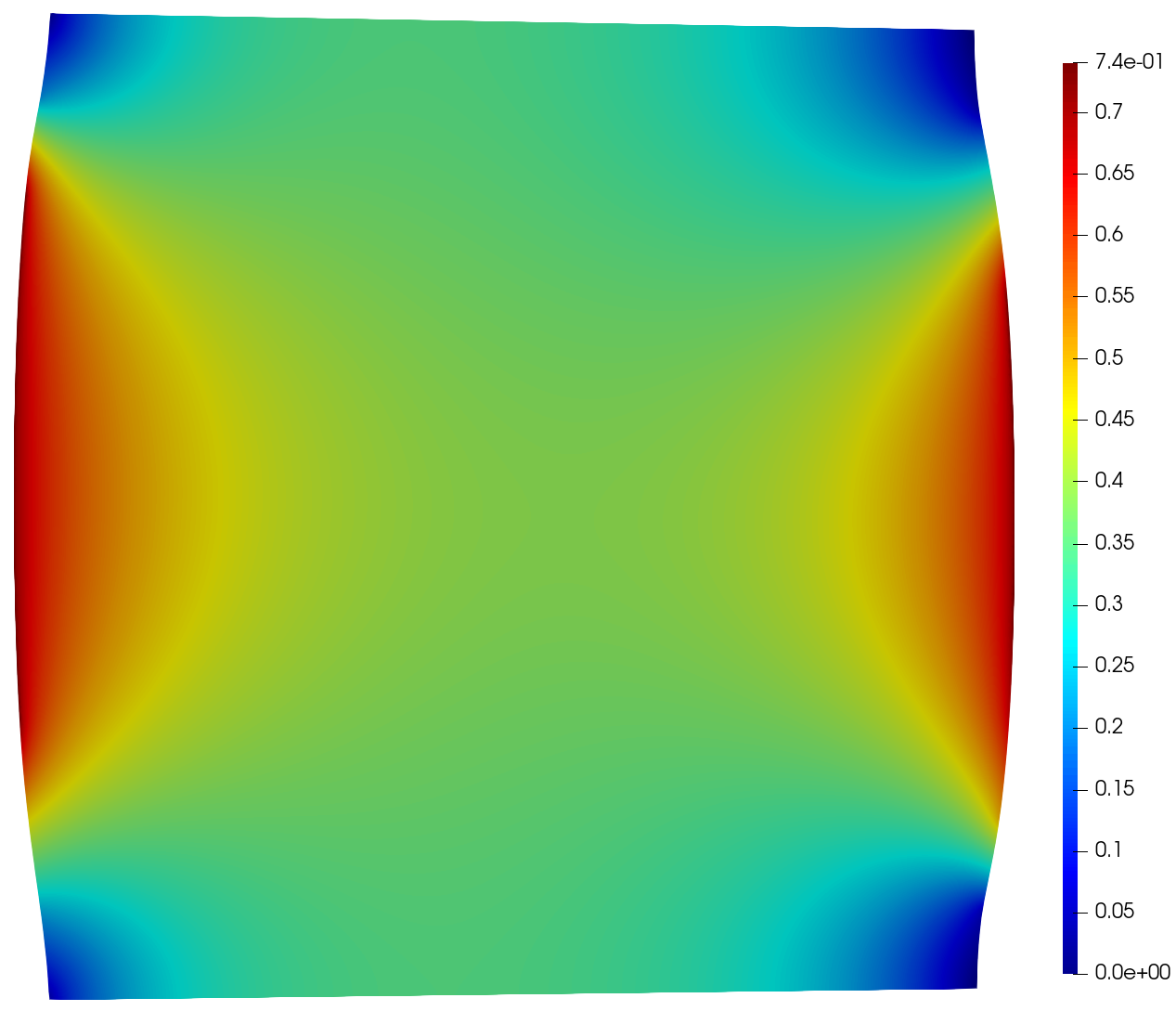}
        \caption{$\Re(\xi_h)$}
        \label{fig:auxetic results num}
    \end{subfigure}
    \hfill
    \begin{subfigure}[b]{0.45\textwidth}
        \centering
        \includegraphics[scale=.65]{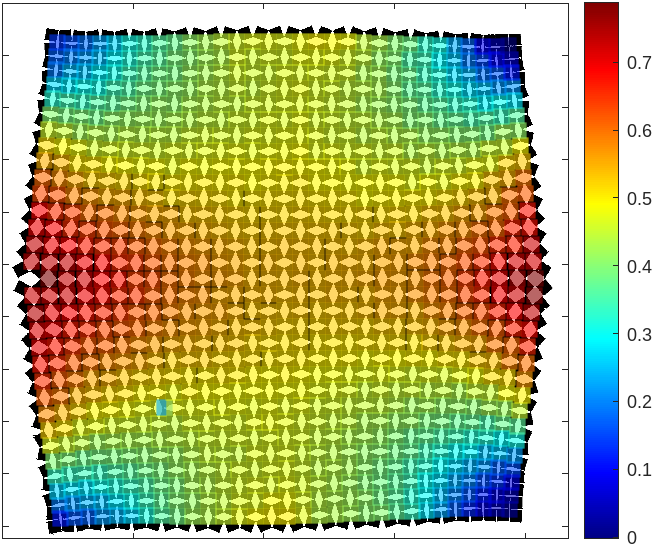}
        \caption{Experimental result}
        \label{fig:auxetic results exp}
    \end{subfigure}
    \caption{Auxetic kirigami}
\end{figure}
The numerical results are very similar to the ones obtained in \cite{zheng2022continuum}.

\subsection{Non-auxetic kirigami}
\label{sec:hyperbolic}
The pattern is sketched in Figure \ref{fig:non-auxetic sketch}.
For this pattern, one has $\alpha=-0.9$ and $\beta=0$.
In \cite{zheng2022continuum}, a solution method using traveling waves was proposed to compute patterns for which $\beta = 0$.
For $\xi \in [0, \frac\pi3]$, $-\Gamma_{21}(\xi) \le 0$ and thus \eqref{eq:strong form} is degenerate hyperbolic.
The domain is $\Omega = (0,L) \times (0,L)$, where $L=15$.
Homogeneous Neumann boundary conditions are imposed on the top and bottom surfaces and a nonhomogeneous Dirichlet condition $\xi_D$ is imposed on the left and right surfaces.

We consider a mesh of size $h = 4.7 \cdot 10^{-2}$, with $205,441$ dofs.
Figure \ref{fig:non-auxetic results} shows $\Re(\xi_h)$ in the deformed configuration for various values of $\varepsilon$.
\begin{figure}[!htp]
\centering
	\begin{subfigure}[b]{0.45\textwidth}
        \centering
        \includegraphics[scale=.5]{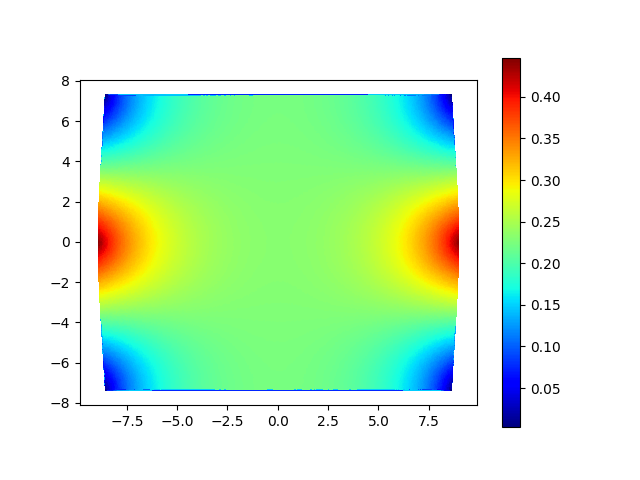}
        \caption{$\varepsilon = 1$}
    \end{subfigure}
    \hfill
	\begin{subfigure}[b]{0.45\textwidth}
        \centering
        \includegraphics[scale=.5]{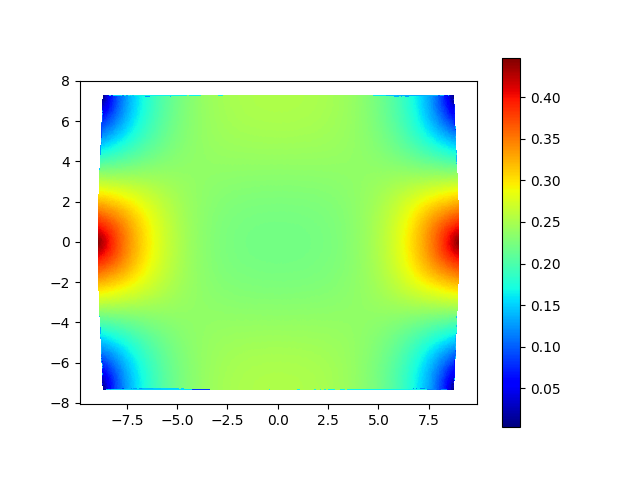}
        \caption{$\varepsilon = 0.4$}
    \end{subfigure}
    
    \begin{subfigure}[b]{0.45\textwidth}
        \centering
        \includegraphics[scale=.5]{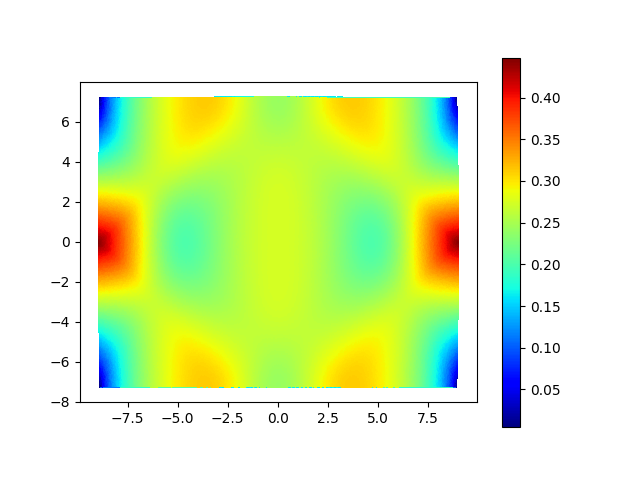}
        \caption{$\varepsilon = 0.1$}
    \end{subfigure}
    \hfill
    \begin{subfigure}[b]{0.45\textwidth}
        \centering
        \includegraphics[scale=.5]{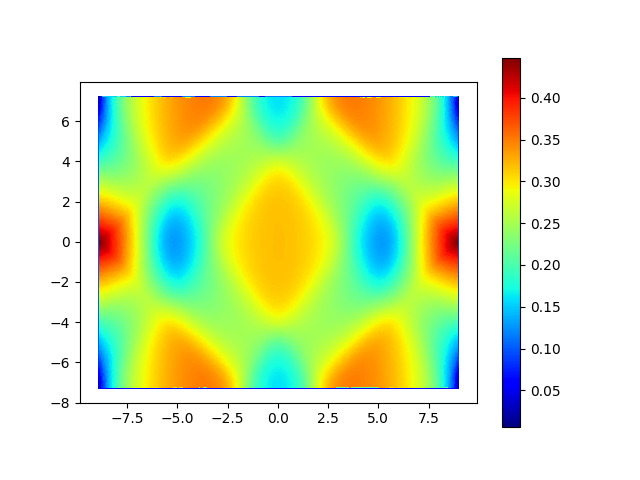}
        \caption{$\varepsilon = 0.05$}
    \end{subfigure}
    \caption{Non-auxetic kirigami: $\Re(\xi_h)$.}
\label{fig:non-auxetic results}
\end{figure}
For $\varepsilon \ge 1$, we start seeing results characteristic of an elliptic equation, which is not the behavior observed in the experiments.
The computation diverges with $\varepsilon = 0.01$.
Figure \ref{fig:hyperbolic results exp} shows the experimental result.
\begin{figure}[!htp]
\centering
\includegraphics[scale=.7]{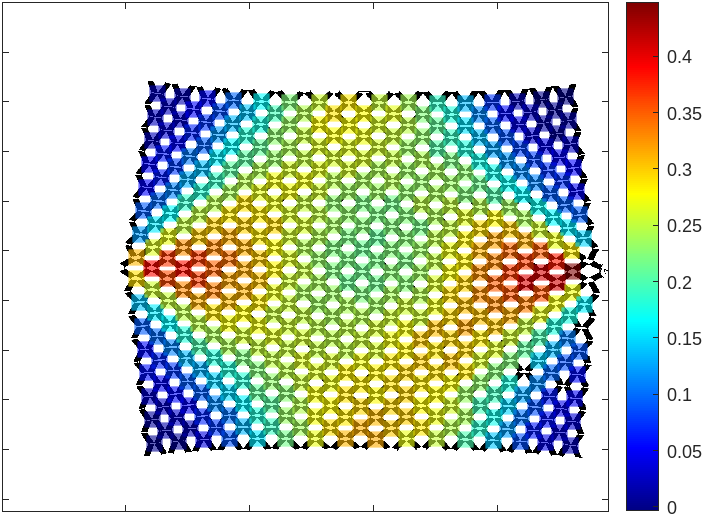}
\caption{Non-auxetic kirigami: experimental result.}
\label{fig:hyperbolic results exp}
\end{figure}
In Figure \ref{fig:hyperbolic results exp}, one can see a slight depression in the middle of the sample.
In Figure \ref{fig:non-auxetic results}, one can observe a similar depression for $\varepsilon = 0.4$.
With respect to the results of \cite{zheng2022continuum}, this numerical result shows a better agreement with the experimental data.

\subsection{Mixed type kirigami}
\label{sec:mixed}
The pattern is sketched in Figure \ref{fig:mixed sketch}.
\begin{figure}[!htp]
\centering
\includegraphics[scale=.5]{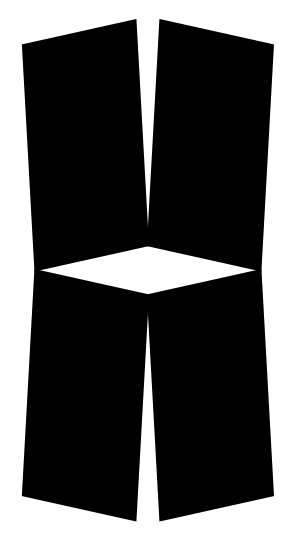}
\caption{Mixed type kirigami: sketch from Paolo Celli.}
\label{fig:mixed sketch}
\end{figure}
For this pattern, one has $\alpha=-1.6$ and $\beta=0.4$.
The domain is $\Omega = (0,L) \times (0,L)$, where $L=15$.
Homogeneous Neumann boundary conditions are imposed on the top and bottom surfaces and a nonhomogeneous Dirichlet condition $\xi_D$ is imposed on the left and right surfaces.
A main contribution of this paper is to be able to approximate solutions for this pattern.
Indeed, \cite{zheng2022continuum} proposed solutions methods only when $\alpha=-\beta$ or $\beta=0$, which is not the case for this pattern.

We consider a mesh of size $h = 4.7 \cdot 10^{-2}$, with $205,441$ dofs.
Figure \ref{fig:mixed results exp} shows the experimental results.
\begin{figure}[!htp]
\centering
\includegraphics[scale=.5]{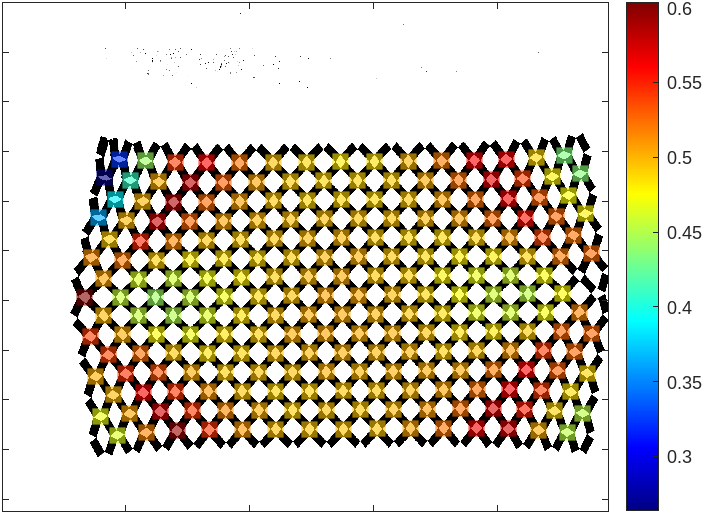}
\caption{Mixed type kirigami: experimental result.}
\label{fig:mixed results exp}
\end{figure}
Figure \ref{fig:mixed results} shows $\Re(\xi_h)$ in the deformed configuration for various values of $\varepsilon$.
\begin{figure}[!htp]
\centering
	\begin{subfigure}[b]{0.45\textwidth}
        \centering
        \includegraphics[scale=.5]{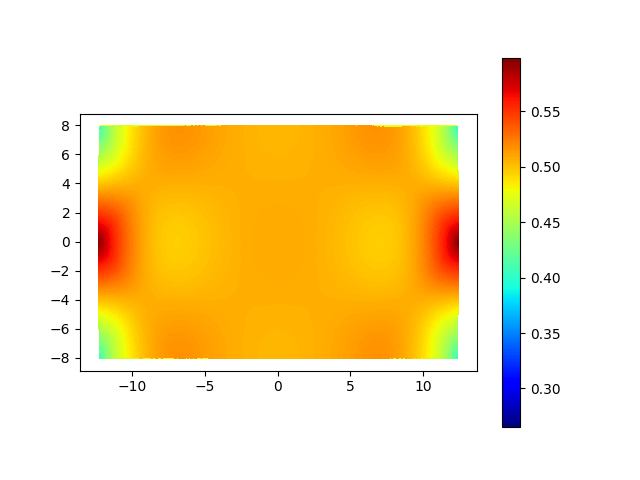}
        \caption{$\varepsilon = 0.15$}
    \end{subfigure}
    \hfill
	\begin{subfigure}[b]{0.45\textwidth}
        \centering
        \includegraphics[scale=.5]{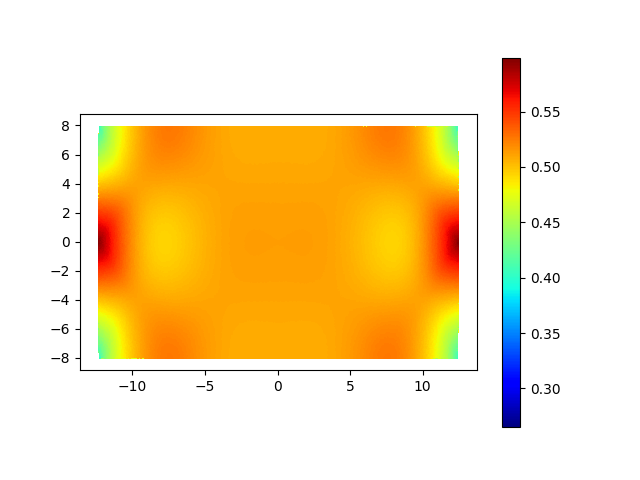}
        \caption{$\varepsilon = 0.1$}
    \end{subfigure}
    
    \begin{subfigure}[b]{0.45\textwidth}
        \centering
        \includegraphics[scale=.5]{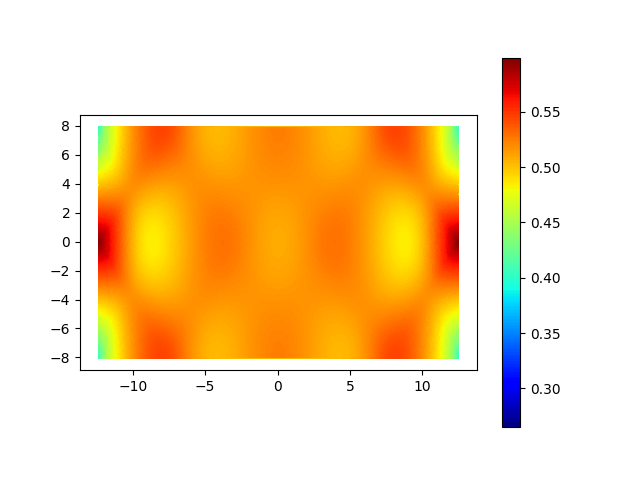}
        \caption{$\varepsilon = 0.05$}
    \end{subfigure}
    \hfill
    \begin{subfigure}[b]{0.45\textwidth}
        \centering
        \includegraphics[scale=.5]{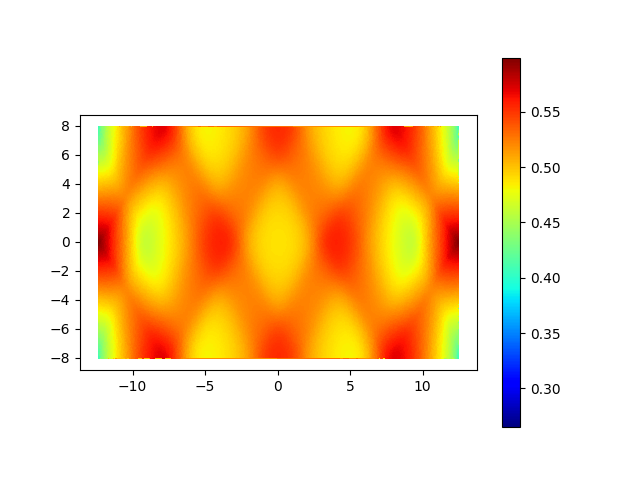}
        \caption{$\varepsilon = 0.02$}
    \end{subfigure}
    \caption{Mixed type kirigami: $\Re(\xi_h)$.}
\label{fig:mixed results}
\end{figure}
We notice that for $\varepsilon=0.15$ and $\varepsilon = 0.1$, the numerical results are in accordance with the experimental results as the two depressions, on the left and right, are well represented.
For $\varepsilon \le 0.05$, we notice that the computation seems to present too many depressions.
For a value $\varepsilon = 10^{-2}$, the Newton solver diverges.
This numerical test shows a good agreement with the experimental results.

\subsection{Discussion}
It has been noted in Sections \ref{sec:hyperbolic} and \ref{sec:mixed} that, for small enough $\varepsilon$, the agreement between the experimental and numerical results was not satisfactory.
Two phenomena seem to be at play here.
The first one is that since the hinges are not perfect in the experimental setting, there is some energy that is dissipated in their deformation.
This seems to justify that a small but non-zero value of $\varepsilon$ is physically relevant to reproduce the experimental results.

The second phenomenon at play is related to the behavior of viscosity solutions \cite{crandall1992user}.
Note that the notion of viscosity solution applies to nonlinear elliptic equations and not nonlinear hyperbolic equations.
Even though a parallel can be drawn between the methodology proposed in this paper and the notion of viscosity solution, it is in no case a proof.

Viscosity solutions, when they are defined as limits of regularized solutions, can sometimes ``lose" a part of their Dirichlet boundary conditions.
Let us consider the following example.
Let $\Omega = (0,1)$.
We want so solve for $x \in \Omega$,
\[ u'(x) = 1 \text{ such that } u(0) = u(1) = 0. \]
Let $\varepsilon > 0$.
We consider the regularized equation, for $x \in \Omega$,
\[ u_\varepsilon'(x) - \varepsilon u_\varepsilon''(x) = 1 \text{ such that } u_\varepsilon(0) = u_\varepsilon(1) = 0. \]
One can show that this equation has a unique solution.
Upon computing it, one can show that for all $\varepsilon > 0$, $u_\varepsilon(x) = 0$ and for $x \in \Omega$, $u_\varepsilon(x) \to x $, as $\varepsilon \to 0$.
In this case, $u(x) := x$ is the viscosity limit of $u_\varepsilon$ and is a viscosity solution of the original problem but it is not a classical solution since $u(1) \neq 0$.
The boundary condition on the right of the domain has been ``lost".

A similar situation is observed for the regularization used in this paper.
Let $\Omega \subset \mathbb{R}^2$ be the domain sketched in Figure \ref{fig:domain}.
\begin{figure} [!htp]
   \centering
   \begin{tikzpicture}
   \draw (0,1) -- (1,0) -- (0,-1) -- (-1, 0) -- cycle; 
   \draw node at (1,-0.5) {$(1,0)$};
   \draw node at (-1.3,0.3) {$(-1,0)$};
   \draw node at (-.7,-1) {$(0,-1)$};
   \draw node at (0.5,1) {$(0,1)$};
   
   \draw[->] (0,-1.5) -- (0,1.5);
   \draw[below] node at (1.5,0) {$x$};
   \draw[->] (-1.5,0) -- (1.5,0);
   \draw[right] node at (0,1.5) {$y$};

   \end{tikzpicture}
   \caption{Discussion: geometry of the domain $\Omega$.}
   \label{fig:domain}
\end{figure} 
Let us consider the problem of searching for $u:\Omega \to \mathbb{R}$, such that 
\begin{equation}
\label{eq:wave equation}
u_{xx} = u_{yy}.
\end{equation}
This wave equation can be written in divergence form and regularized as in \eqref{eq:linear complex}.
Let us add the Dirichlet boundary conditions $u(x,y) = 0$ when $y < 0$, as well as $u(x,y) = y$ when $y \ge 0$, where $(x,y) \in \partial \Omega$.
Upon discretizing \eqref{eq:wave equation} with Lagrange $\mathbb P^1$ finite elements, and considering $\varepsilon = 3 h$, one gets the result in Figure \ref{fig:viscosity}.
\begin{figure}[!htp]
\centering
\includegraphics[scale=.25]{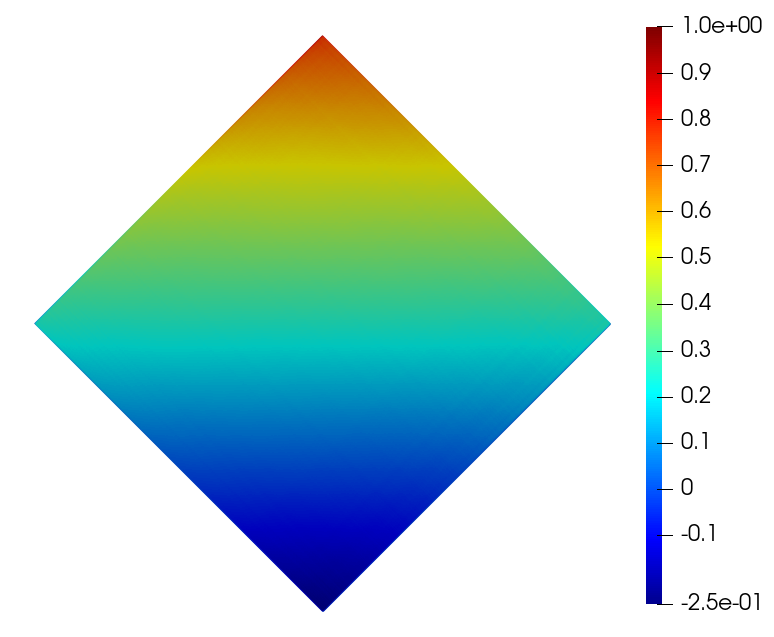}
\caption{Discussion: results of the finite element computation.}
\label{fig:viscosity}
\end{figure}
We observe that the Dirichlet boundary condition $u(x,y) = 0$ when $y < 0$ is ``lost" when $\varepsilon \to 0$.
Note that when $\varepsilon = 1$, for instance, that boundary condition is perfectly respected by the finite element computation.

Since a part of the Dirichlet boundary conditions can be ``lost", it should not be surprising that the numerical results and the experimental results differ when $\varepsilon$ is small.

\section{Conclusion}
This paper has presented the analysis of an approximation of a nonlinear degenerate sign-changing divergence form PDE that models the deformation of a specific type of kirigami called rhombi-slits.
Under appropriate boundary conditions, it proved existence of solutions to \eqref{eq:fix point}.
Then, a numerical method based on Lagrange $\mathbb{P}^1$ finite elements is analyzed and shown to converge towards the solutions of \eqref{eq:fix point}.
Finally, numerical results show the robustness of the method with respect to previous results and experimental results.
Future work includes trying to solve for an imposed $y_\mathrm{eff}$ on the boundary $\partial \Omega_D$ and not a $\xi$.

\section*{Code availability}
The code is available at \url{https://github.com/marazzaf/rhombi_slit.git}.

\section*{Acknowledgment}
The author would like to thank Paul Plucinsky (University of Southern California) and Ian Tobasco (University of Michigan) for fruitful discussions.

The author would also like to thank Paolo Celli (Stony Brook University) for providing him with experimental results, produced for \cite{zheng2022continuum}, and with a \texttt{matlab} code to extract the true deformations from the experiments.

\section*{Funding}
This work is supported by the US National Science Foundation under grant number OIA-1946231 and the Louisiana Board of Regents for the Louisiana Materials Design Alliance (LAMDA).

\bibliographystyle{plain}
\bibliography{bib}

\end{document}